\def\cgaps#1{}
\def\Cgaps#1{}
\def\AMStag#1{}
\def\AMSunderset#1\to#2{\underset{#1}{#2}}
\def\AMSoverset#1\to#2{\overset{#1}{#2}}
\def\undersetbrace#1\to#2{\underbrace{#2}_{#1}}
\def\oversetbrace#1\to#2{\overbrace{#2}^{#1}}
\def\3{\ss}
\newtheorem*{proposition*}{Proposition}
\newtheorem{theorem}[subsection]{Theorem}
\newtheorem*{theorem*}{Theorem}
\newtheorem{lemma}[subsection]{Lemma}
\newtheorem*{lemma*}{Lemma}
\newtheorem*{corollary*}{Corollary}
\newtheorem*{result*}{Result}
\def\cit#1#2{\ifx#1!\cite{2}\else#2\fi} 
\def\ign#1{}             
\def\o{\circ\,} 
\def\X{\mathfrak X} 
\def\al{\alpha} 
\def\be{\beta}
\def\ep{\varepsilon}
\def\la{\lambda} 
\def\rh{\rho} 
\def\ta{\tau} 
\def\ph{\varphi} 
\def\ps{\psi} 
\def\Ga{\Gamma}
\def\i{^{-1}}
\def\F{\mathcal{F}}
\def\exp{\operatorname{exp}}
\let\on=\operatorname
\DeclareMathOperator{\Diff}{Diff}
\newcommand{\R}{\mathbb{R}}
\newcommand{\ud}{\,\mathrm{d}}
\newcommand{\one}{\mathbbm{1}}
\title[Fractional order Sobolev 
metrics on the diffeomorphism group. II]
{Geodesic distance for right invariant Sobolev 
metrics of fractional order on the diffeomorphism group. II}
\author{Martin Bauer, Martins Bruveris, Peter W. Michor}
\address{
Martin Bauer, Peter W.\ Michor: Fakult\"at f\"ur Mathematik,
Universit\"at Wien, 
Nordbergstrasse 15, A-1090 Wien, Austria.\newline\indent
Martins Bruveris: Insitut de math\'ematiques, EPFL, CH-1015, Lausanne, Switzerland.
}
\email{bauer.martin@univie.ac.at}
\email{martins.bruveris@epfl.ch}
\email{peter.michor@esi.ac.at}
\thanks{Martin Bauer was  supported by `Fonds zur
F\"orderung der wissenschaftlichen                    
Forschung, Projekt P~24625'.}
\subjclass[2010]{Primary 35Q31, 58B20, 58D05} 
\begin{document}
\begin{abstract}
The geodesic distance vanishes on the group $\Diff_c(M)$ 
of compactly supported diffeomorphisms of a Riemannian manifold $M$ of bounded geometry, 
for the right invariant weak Riemannian metric which is induced by the Sobolev metric 
$H^s$ of order $0\le s<\tfrac12$ on the Lie algebra $\X_c(M)$ of vector fields with compact 
support. 
\end{abstract} 

\maketitle

\section{Introduction}

In the article \cite{Bauer2011c_preprint} we studied right invariant metrics on the group $\Diff_c(M)$ 
of compactly supported diffeomorphisms of a manifold $M$, which are induced by the Sobolev metric 
$H^s$ of order $s$ on the Lie algebra $\X_c(M)$ of vector fields with compact 
support. 
We showed that for $M=S^1$ the geodesic distance on $\Diff(S^1)$ vanishes if and only if $s\leq\frac12$. 
For other manifolds, we showed that the geodesic distance on $\Diff_c(M)$ vanishes for $M=\R\times N, s<\frac12$ and for 
$M=S^1\times N, s\leq\frac12$, with $N$ being a compact Riemannian manifold.

Now we are able to complement this result by:
{\em The geodesic distance vanishes on $\Diff_c(M)$ for any Riemannian manifold $M$ of bounded geometry, 
if $0\le s<\frac12$.}

We believe that this result holds also for $s=\frac12$, but we were able to overcome the technical 
difficulties only for the manifold $M=S^1$, in \cite{Bauer2011c_preprint}. We also believe that it 
is true for the regular groups $\Diff_{\mathcal H^\infty}(\mathbb R^n)$ and 
$\Diff_{\mathcal S}(\mathbb R^n)$ as treated in \cite{Michor2012b_preprint}, and for all Virasoro 
groups, where we could prove it only for $s=0$ in \cite{Bauer2012c}.

In Section \ref{sec:frac}, we review the definitions for Sobolev norms of fractional 
orders on diffeomorphism groups as presented in \cite{Bauer2011c_preprint}
and extend them to diffeomorphism groups of manifolds of bounded geometry.
Section \ref{vanishing} is devoted to the main result.

\section{Sobolev metrics $H^s$ with $s\in \mathbb R$}
\label{sec:frac}

\subsection{Sobolev metrics $H^s$ on $\mathbb R^n$}
For $s\geq 0$ the Sobolev $H^s$-norm of an $\R^n$-valued function $f$ on $\R^n$ is
defined as
\begin{equation}
\label{Hs_norm}
\| f \|_{H^s(\R^n)}^2 = \| \F\i (1+|\xi|^2)^{\frac{s}2} \F f \|_{L^2(\R^n)}^2\;,
\end{equation}
where $\F$ is the Fourier transform 
\[ \F f(\xi) = (2\pi)^{-\frac n 2} \int_{\R^n} e^{-i \langle  x,\xi\rangle} f(x)
\ud x\;, \]
and $\xi$ is the independent variable in the frequency domain.
An equivalent norm is given by
\begin{equation}
\label{Hsbar_norm}
\| f \|^2_{\overline{H}^s(\R^n)} = \| f \|^2_{L^2(\R^n)} + \| |\xi|^s \F f
\|^2_{L^2(\R^n)}\;.
\end{equation}
The fact that both norms are equivalent is based on the inequality
\[ \frac{1}C \big( 1 + \sum_j |\xi_j|^s \big) \leq \big(1 + \sum_j |\xi_j|^2
\big)^{\frac{s}{2}} \leq C \big( 1 + \sum_j |\xi_j|^s \big)\;, \]
holding for some constant $C$. For $s>1$ this says that all $\ell^s$-norms on
$\R^{n+1}$ are equivalent. 
But the inequality is true also for $0 < s < 1$, even though the expression does
not define a norm on $\R^{n+1}$.
Using any of these norms we obtain the  Sobolev spaces with non-integral $s$
$$H^s(\R^n) = \{ f \in L^2(\R^n): \| f \|_{H^s(\R^n)} < \infty \}\;.$$
We will use the second version of the norm in the proof of the theorem, since it will make calculations easier.

\subsection{Sobolev metrics for Riemannian manifolds of bounded geometry}
\label{boundedGeometry}
Following \cite[Section~7.2.1]{Triebel1992} we will now introduce the spaces
$H^s(M)$ on a manifold 
$M$. 
If $M$ is not compact we equip $M$ with a Riemannian 
metric $g$ of bounded geometry which exists by \cite{Greene1978}. This means that
\\\indent $(I)$\qquad 
The injectivity radius of $(M,g)$ is positive.
\\\indent $(B_\infty)$\quad Each iterated covariant derivative of the curvature 
\\\hphantom{A}\qquad\qquad is uniformly $g$-bounded: 
        $\|\nabla^i R\|_g<C_i$ for $i=0,1,2,\dots$.
\\
The following is a compilation of special cases of results collected in 
\cite[Chapter 1]{Eichhorn2007}, 
who treats Sobolev spaces only for integral order.

\begin{proposition*} [\cite{Kordyukov1991}, \cite{Shubin1992}, \cite{Eichhorn1991}]
If $(M,g)$ satisfies $(I)$ and $(B_\infty)$ then the following holds:
\begin{enumerate}
	\item $(M,g)$ is complete.
	\item There exists $\ep_0>0$ such that for each $\ep\in (0,\ep_0)$ there
is a countable cover of 
        $M$ by geodesic balls $B_\ep(x_\al)$ such that the cover of $M$ by the
balls $B_{2\ep}(x_\al)$ 
        is still uniformly locally finite.  
	\item Moreover, there exists a partition of unity $1= \sum_\al \rh_\al$
on $M$ such that $\rh_\al\ge 0$, 
        $\rh_\al\in C^\infty_c(M)$, $\on{supp}(\rh_\al)\subset B_{2\ep}(x_\al)$,
and 
				$|D_u^\be \rh_\al|<C_\be$ where $u$ are normal
(Riemann exponential) coordinates in 
        $B_{2\ep}(x_\al)$. 
	\item In each $B_{2\ep}(x_\al)$, in normal coordinates, we have 
	      $|D_u^\be g_{ij}|<C'_\be$,  
	      $|D_u^\be g^{ij}|<C''_\be$, and 
	      $|D_u^\be \Ga^m_{ij}|<C'''_\be$,
				where all constants are independent of $\al$.  
\end{enumerate}
\end{proposition*}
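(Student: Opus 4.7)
\begin{demo}{Proof plan}
The four statements are largely independent; I would prove them in the order $(1)$, $(4)$, $(2)$, $(3)$, since $(3)$ rests on both $(2)$ and $(4)$. For $(1)$, I would argue directly from $(I)$: if $\ga\colon[0,T)\to M$ is a unit-speed geodesic then $\ga(t_n)$ is Cauchy for $t_n\to T$, so choosing $n_0$ with $d(\ga(t_{n_0}),\ga(t_n))<\iota_0/2$ for all $n\ge n_0$ puts the tail of the sequence in a single normal chart, where it is Cauchy in the Euclidean sense and converges to some $p\in M$. I would then extend $\ga$ past $T$ in normal coordinates at $p$; Hopf--Rinow yields metric completeness.

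For $(4)$ the strategy is to Taylor expand $g_{ij}$ in normal coordinates $u$ at $x_\al$. The coefficients of the expansion are universal polynomial expressions in the components of $R$ and its iterated covariant derivatives evaluated at $x_\al$, and $(B_\infty)$ bounds these uniformly in $\al$; fixing $2\ep<\iota_0$ this yields $|D_u^\be g_{ij}|<C'_\be$ on $B_{2\ep}(x_\al)$, with $C'_\be$ independent of $\al$. A uniform positive lower bound on the smallest eigenvalue of $(g_{ij})$ comes from Rauch comparison, using the sectional-curvature bound from $(B_\infty)$ and the size of $\ep$ relative to $\iota_0$; by Cramer's rule and the chain rule this promotes the bound to $|D_u^\be g^{ij}|<C''_\be$, whence $|D_u^\be \Ga^m_{ij}|<C'''_\be$ is immediate from $\Ga^m_{ij}=\tfrac12 g^{mk}(\p_i g_{jk}+\p_j g_{ik}-\p_k g_{ij})$.

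For $(2)$ I would fix $\ep_0<\iota_0/2$ and choose a maximal $\ep/2$-separated subset $\{x_\al\}$ of $M$. Maximality forces the cover $M=\bigcup_\al B_\ep(x_\al)$, and the disjoint balls $\{B_{\ep/4}(x_\al)\}$, combined with a Ricci lower bound from $(B_\infty)$ and Bishop--Gromov volume comparison, bound the number of $x_\be$ that can meet $B_{5\ep/2}(x_\al)$, which is exactly uniform local finiteness of $\{B_{2\ep}(x_\al)\}$. For $(3)$ I would pull back a fixed smooth radial Euclidean bump $\chi$ supported in $|u|\le 2\ep$ and equal to $1$ on $|u|\le\ep$ via normal coordinates at each $x_\al$, obtaining $\wt\rh_\al$ whose $u$-derivatives are uniformly bounded in $\al$ by $(4)$; then I would set $\rh_\al=\wt\rh_\al/\sum_\be\wt\rh_\be$. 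The denominator is pointwise $\ge 1$ and, by $(2)$, locally a sum of uniformly boundedly many terms with uniformly bounded derivatives, so the quotient inherits the required bounds.

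The main obstacle is $(4)$: extracting uniform $C^k$ bounds on $g_{ij}$ in normal coordinates from pointwise bounds on the iterated covariant derivatives $\nabla^k R$. One controls $g_{ij}$ along radial geodesics via the Jacobi equation, whose coefficients involve $R$ at nearby points, and then obtains the tangential derivatives through variation fields of these radial geodesics, propagating $(B_\infty)$ into bounds on all higher Taylor coefficients. The careful bookkeeping is the technical heart of the proposition and is carried out in Chapter~1 of \cite{Eichhorn2007}.
\end{demo}
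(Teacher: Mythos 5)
The paper does not prove this proposition; it is stated explicitly as a compilation of known results from \cite{Kordyukov1991}, \cite{Shubin1992}, \cite{Eichhorn1991} and \cite[Chapter~1]{Eichhorn2007}, so there is no in-text argument to compare your plan against. That said, your sketch follows the standard route found in those sources: completeness from positive injectivity radius via geodesic extension, uniform local finiteness from a maximal $\ep/2$-separated net plus Bishop--Gromov with the Ricci bound supplied by $(B_\infty)$, and a normalized pullback of a fixed Euclidean bump for the partition of unity.

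Two points deserve sharpening. First, in item $(4)$ your opening sentence slides from ``the Taylor coefficients of $g_{ij}$ at $u=0$ are universal polynomials in $\nabla^k R$ at $x_\al$'' to a uniform bound on $|D_u^\be g_{ij}|$ throughout $B_{2\ep}(x_\al)$; bounded Taylor coefficients at the center do not by themselves bound derivatives on a neighborhood. You correctly flag this in your final paragraph (Jacobi/Riccati equation along radial geodesics with variation fields, as in Eichhorn), but the earlier sentence should not pretend the conclusion is already in hand. Second, in item $(3)$ the bound ``by $(4)$'' is not quite the right citation: the $u$-derivatives of $\wt\rh_\al$ in its own normal chart are just those of the fixed bump $\chi$; what you actually need from $(4)$ (plus an ODE estimate for the geodesic flow) is uniform $C^\infty$ control of the transition maps $\exp_{x_\al}^{-1}\circ\exp_{x_\be}$ so that the finitely many overlapping $\wt\rh_\be$ have uniformly bounded derivatives when expressed in the $u$-coordinates at $x_\al$. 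For $(1)$, a slightly cleaner argument than the Cauchy-sequence one is: positive injectivity radius means $\exp_p$ is defined on $B_{\iota_0}(0_p)$ for every $p$, so every geodesic extends by a definite time $\iota_0$ from any of its points; geodesic completeness and then Hopf--Rinow follow immediately.
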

We can now define 
the $H^s$-norm of a function $f$ on $M$:
\begin{align*}
\| f \|_{H^s(M,g)}^2 &= \sum_{\al=0}^\infty \| (\rh_\al f)\o \exp_{x_\al}
\|^2_{H^s(\R^n)} =
\\&
= \sum_{\al=0}^\infty \| \F\i (1+|\xi|^2)^{\frac{s}2} \F ((\rh_\al f)\o
\exp_{x_\al}) \|^2_{L^2(\R^n)}\;.
\end{align*}
If $M$ is compact the sum is finite.
Changing the charts or the partition of unity leads to equivalent norms by the
proposition above, 
see \cite[Theorem 7.2.3]{Triebel1992}.
For integer $s$ we get norms which are equivalent to the Sobolev norms treated
in 
\cite[Chapter 2]{Eichhorn2007}. The norms depends on the choice of the Riemann
metric $g$. This 
dependence is worked out in detail in \cite{Eichhorn2007}.

For vector fields we use the trivialization of the tangent bundle 
that is induced by the coordinate charts and define the norm in each coordinate
as above. 
This leads to a (up to equivalence) well-defined $H^s$-norm on the Lie algebra
$\X_c(M)$.

\subsection{Sobolev metrics on $\Diff_c(M)$}
Given a norm on $\X_c(M)$ we can use the right-multiplication in the
diffeomorphism group 
$\Diff_c(M)$  to extend this norm to a right-invariant Riemannian metric on
$\Diff_c(M)$. 
In detail, given $\ph \in \Diff_c(M)$ and $X, Y \in T_\ph \Diff_c(M)$ we define
\[ G^s_\ph(X,Y) = \langle X\o \ph\i, Y\o\ph\i \rangle_{H^s(M)}\;.\]

We 
are interested solely in questions of vanishing and non-vanishing of geodesic
distance. 
These properties are invariant under changes to equivalent inner products, since
equivalent inner products on the Lie Algebra
\[ \frac{1}C \langle X, Y \rangle_1 \leq \langle X, Y \rangle_2 \leq C \langle
X, Y \rangle_1 \]
imply that the geodesic distances will be equivalent metrics
\[ \frac{1}C \on{dist}_1(\ph, \ps) \leq \on{dist}_2(\ph, \ps) \leq C
\on{dist}_1(\ph, \ps) \;.\]
Therefore the ambiguity -- dependence on the charts and the partition of unity -- in the definition of the $H^s$-norm is of no concern to us.

\section{Vanishing geodesic distance}\label{vanishing}

\begin{theorem}[Vanishing geodesic distance]\label{vanishingthm}
The Sobolev metric of order $s$ induces vanishing geodesic distance on $\on{Diff}_c(M)$ if:
\begin{itemize}
\item
$0\leq s < \frac 12$ and $M$ is any Riemannian manifold of bounded geometry.
\end{itemize}
This means that any two diffeomorphisms in the same connected component of
$\Diff_c(M)$ 
can be connected by a path of arbitrarily short $G^s$-length.
\end{theorem}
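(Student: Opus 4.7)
\emph{Reduction.} By right-invariance of $G^s$, the geodesic distance is right-invariant, so it is enough to show that $\on{dist}(\on{id},\ph)=0$ for every $\ph$ in the identity component of $\Diff_c(M)$. A standard fragmentation argument on a bounded-geometry manifold writes any such $\ph$ as a finite composition $\ph_1\circ\cdots\circ\ph_K$ with $\on{supp}(\ph_j)\subset B_{2\ep}(x_{\al_j})$ in the uniformly locally finite cover provided by part~(2) of the Proposition. The triangle inequality together with right-invariance then reduces the problem to the case $\on{supp}(\ph)\subset B_{2\ep}(x_\al)$ for a single $\al$.

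\emph{Local model.} In normal coordinates on $B_{2\ep}(x_\al)$, part~(4) of the Proposition gives uniform bi-Lipschitz equivalence between $g$ and the pulled-back Euclidean metric, with constants independent of $\al$. By uniform local finiteness only boundedly many summands $\rh_\be$ of the partition of unity contribute to sections supported in $B_{2\ep}(x_\al)$, so $\|X\|_{H^s(M,g)}$ is equivalent, with uniform constants, to the Euclidean norm $\|X\circ\exp_{x_\al}\|_{H^s(\R^n)}$ for vector fields $X$ supported in the chart. Since vanishing of geodesic distance is preserved under passage to equivalent inner products (Subsection~2.3), the problem reduces to proving the same statement for $\Diff_c(\R^n)$ in the Euclidean $H^s$-metric.

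\emph{Euclidean construction.} I will import the argument from \cite{Bauer2011c_preprint} for $M=\R\times N$ with compact $N$: view $\ps\in\Diff_c(\R^n)$ as supported in $\R\times K$ for a sufficiently large compact box $K\subset\R^{n-1}$, embed $K$ into a compact manifold $N$, and observe that the $H^s$-norm of vector fields supported in $\R\times K$ is equivalent in the two ambient manifolds. That construction produces a time-dependent vector field $X^N$ realizing $\ps$ at time $1$ via a compress--translate--decompress wave along one coordinate direction; under a frequency-$N$ spatial rescaling along that direction, the Euclidean $H^s$-length $\int_0^1 \|X^N\|_{H^s(\R^n)}\,dt$ scales like $N^{s-1/2}$, which tends to zero precisely when $s<\tfrac12$.

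\emph{Main obstacle.} The bulk of the work lies in the uniform norm-equivalence bookkeeping of the local-model step: the bounded-geometry bounds in part~(4) of the Proposition must propagate through the partition-of-unity definition of $\|\cdot\|_{H^s(M,g)}$ for fractional $s$ (the integer case being in \cite{Eichhorn2007}) with constants independent of $\al$, so that fragmentation into many small-ball pieces does not inflate the length estimate. Once this uniformity is in hand, the fragmentation step is standard for right-invariant metrics and the Euclidean construction is imported from the prior paper with only notational changes; the only place where the hypothesis $s<\tfrac12$ is genuinely used is the requirement that the exponent $s-\tfrac12$ be negative.
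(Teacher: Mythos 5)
Your proposal takes a genuinely different route from the paper, and the key difference creates a gap you have not addressed.

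The paper reduces the problem using the Thurston--Mather simplicity theorem: it suffices to exhibit \emph{one} nontrivial diffeomorphism $\ph\neq\on{Id}$ reachable from the identity by arbitrarily short paths; the set of all such diffeomorphisms is a nontrivial normal subgroup of the simple group $\Diff_0(M)$, hence all of it. This keeps the demand on the actual construction minimal, and the paper meets it explicitly with a \emph{radially symmetric} vector field $u^\ep_{\R^n}(x)=(u^\ep_\R(|x|),0,\dots,0)$ built from the one-dimensional Lemma~\ref{VectorfieldR}, together with explicit Bessel-function computations of $\|\one_{[f,g]}(|\cdot|)\|_{H^s(\R^n)}$. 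Crucially, this radial construction keeps the supports of the approximating paths inside a ball controlled by $\on{supp}(\ph_\R)$, which is what lets the paper push the construction through a single normal chart on any $M$ of bounded geometry.

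Your fragmentation approach replaces one example plus simplicity with the much stronger requirement that \emph{every} diffeomorphism $\ph_j$ supported in a small ball of $M$ lie in $\Diff_c(M)^{L=0}$ via paths that can actually be realized inside $M$. You attempt to discharge this by citing the prior paper's result for $\R\times N$, but that result is only the statement ``geodesic distance vanishes on $\Diff_c(\R\times N)$.'' Used as a black box, it gives arbitrarily short paths from $\on{Id}$ to an arbitrary $\ps$, but it gives you no control on the supports of those paths: the approximating curves in $\Diff_c(\R\times N)$ may travel far from $\on{supp}(\ps)$ and may wrap around the compact factor $N$. Without a support bound there is no way to transfer those paths back into the single normal chart of $M$ where the fragment $\ph_j$ lives; and the statement that the $H^s$-norm ``is equivalent in the two ambient manifolds'' for fields supported in $\R\times K$ is only useful if the entire path stays there. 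Recovering such a bound would mean re-deriving the wave construction with a quantitative locality statement -- at which point you are doing more work than the paper does and effectively re-proving Claim B by hand for every local model. There is also a secondary issue: since the prior paper almost certainly also proved its $\R\times N$ result by exhibiting one diffeomorphism and invoking simplicity, the diffeomorphisms reached by \emph{controlled-support} short paths there form, a priori, only a proper subset; so even locality would not immediately cover an arbitrary $\ph_j$.

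The technical obstacle you identify (uniformity of the norm equivalences in $\al$ across the bounded-geometry cover) is real and is handled in the paper by working in a single chart and invoking the partition-of-unity definition with a single $\ta_0\equiv 1$; but the more serious missing step in your proposal is the support control on the approximating curves. If you want to retain the fragmentation route, you should replace the appeal to the $\R\times N$ vanishing result by a direct construction (for instance the paper's radial Bessel-function argument, or the 1D lemma extended to a product with a \emph{small} torus with quantitative support bounds) showing that every $\ph_j$ supported in a sufficiently small ball is reachable by short paths whose supports stay in a slightly larger ball. Once that is in hand, your triangle-inequality reduction is correct, and fragmentation does let you avoid explicitly invoking normality of $\Diff_c(M)^{L=0}$.
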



In the proof of the theorem we shall make use of the following lemma from \cite{Bauer2011c_preprint}.
\begin{lemma}[{\cite[Lemma~3.2]{Bauer2011c_preprint}}]\label{VectorfieldR}
Let $\ph \in \Diff_c(\R)$ be a diffeomorphism satisfying $\ph(x) \geq x$ and let $T>0$ be fixed.  
Then for each $0 \leq s < \tfrac12$ and $\ep>0$ there exists a time dependent vector field 
$u_{\R}^{\ep}$ of the form
$$u_{\R}^{\ep}(t,x)=\one_{[f^{\ep}(t), g^{\ep}(t)]}*G_{\ep}(x),$$
with $f, g \in C^\infty([0,T])$, such that  its  flow  $\ph^{\ep}(t,x)$  satisfies -- independently of $\ep$ -- the properties $\ph^{\ep}(0,x) = x$,
$\ph^{\ep}(T,x)=\ph(x)$ and whose $H^s$-length is 
smaller than $\ep$, i.e.,
$$\on{Len}(\ph{^\ep})=\int_0^T \|u_{\R}^{\ep}(t,\cdot)\|_{H^s}dt\leq C\|f^\ep-g^\ep\|_{\infty}\leq  \ep\;.$$

Furthermore $\{ t \,:\, f^\ep(t) < g^\ep(t)\} \subseteq \on{supp}(\ph)$ and there exists a limit function $h\in C^\infty([0,T])$, such that $f^\ep \to h$ and $g^\ep \to h$ for $\ep \to 0$ and the convergence is uniform in $t$.
\end{lemma}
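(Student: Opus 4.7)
My plan is to construct, for each $\ep > 0$, a time-dependent vector field of the prescribed form $\one_{[f^\ep, g^\ep]} * G_\ep$ whose flow coincides with $\ph$ at time $T$, with the interval $[f^\ep(t), g^\ep(t)]$ made uniformly narrow. The driving observation is that for $0 \le s < \tfrac12$ the $H^s$-norm of $\one_{[a,b]}$ tends to $0$ as $b - a \to 0$, while a particle \emph{inside} the interval moves at unit speed independently of the width. Thus a very narrow interval can still realise macroscopic displacements while having arbitrarily small $H^s$-length.

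I would first analyse the flow of the unsmoothed step field $u(t, x) = \one_{[f(t), g(t)]}(x)$. A particle at $x_0$ is stationary unless $f(t) \le x_0 \le g(t)$, in which case it moves to the right at unit speed; if the interval sweeps to the right slightly faster than one, a particle is caught at the right edge when $g(t_1) = x_0$, moves at unit velocity inside, and is released at the left edge when $f(t_2)$ reaches its current position, with net displacement $t_2 - t_1$. To realise the prescribed $\ph$, I would take $\dot g \equiv 1 + \de$ for a small parameter $\de = \de(\ep)$ and define $f$ by the relation $f(g^{-1}(x) + \ph(x) - x) = \ph(x)$. A direct computation gives the interval width as $\de \cdot (\ph(x) - x)$ at time $t = t_2(x)$, so uniformly
\[
\|f^\ep - g^\ep\|_\infty = \de \cdot \|\ph - \on{id}\|_\infty,
\]
and both $f^\ep, g^\ep$ converge, as $\de \to 0$, to a common smooth limit $h$ tracing a unit-speed sweep through $\on{supp}(\ph)$. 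The mollifier $G_\ep$ is chosen symmetric at a scale compatible with $\de$, so that the smoothed flow still ends at $\ph$ at time $T$.

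The length estimate is then direct: $\|\one_{[f^\ep(t),g^\ep(t)]} * G_\ep\|_{H^s}$ is controlled by (a power of) the width $g^\ep(t) - f^\ep(t)$, and integrating over $[0,T]$ gives $\on{Len}(\ph^\ep) \le C \|f^\ep - g^\ep\|_\infty$ as stated, which is less than $\ep$ once $\de$ is chosen sufficiently small. The support condition $\{t : f^\ep(t) < g^\ep(t)\} \subseteq \on{supp}(\ph)$ is automatic by the construction, since the interval becomes degenerate outside the times corresponding to the sweep through $\on{supp}(\ph)$.

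The main obstacle I expect is securing the \emph{exact} endpoint identity $\ph^\ep(T, x) = \ph(x)$ for every $\ep > 0$: the flow of the mollified field $\one_{[f^\ep, g^\ep]} * G_\ep$ differs slightly from that of $\one_{[f^\ep, g^\ep]}$, so $f^\ep, g^\ep$ must be tuned in a $G_\ep$-dependent way (or the symmetry of $G_\ep$ exploited across the two edges of the interval) to cancel the smoothing error exactly. A secondary difficulty is the regime $T < |\on{supp}(\ph)|$, where a unit-speed sweep does not fit into time $T$; this is handled by allowing an amplitude factor in the vector field (absorbed into $G_\ep$) and observing that the length bound still tends to $0$ as $\de \to 0$.
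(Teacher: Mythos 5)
First, note that this paper does not prove Lemma~\ref{VectorfieldR} at all: it is quoted verbatim from \cite[Lemma~3.2]{Bauer2011c_preprint}, so the only comparison possible is with the proof in that reference, and your moving--interval mechanism (a narrow interval sweeping faster than the particles, which are picked up at the right edge $g$ and dropped at the left edge $f$, with $f$ determined by $f(g^{-1}(x)+\ph(x)-x)=\ph(x)$) is indeed the mechanism behind that construction. However, your write-up has a genuine gap exactly at the step that constitutes the content of the lemma: the identity $\ph^\ep(T,x)=\ph(x)$ \emph{exactly and independently of $\ep$}. Your analysis of the flow is carried out only for the unsmoothed field $\one_{[f,g]}$; for the actual field $\one_{[f^\ep,g^\ep]}*G_\ep$ each particle acquires additional displacement while the two transition layers (of width $\sim\ep$, moving with speeds $\dot g$ and $\dot f$ respectively) pass over it, and this correction depends on $\ep$ and on the locally varying speed $\dot f$ (hence on $\ph'$), so the $f$ you defined from the unsmoothed dynamics does \emph{not} give endpoint $\ph$. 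Saying that $f^\ep,g^\ep$ ``must be tuned in a $G_\ep$-dependent way'' names the problem but does not solve it: one has to define $f^\ep,g^\ep$ implicitly through the flow of the smoothed field (or otherwise show the layer contributions can be cancelled exactly), and then re-verify smoothness on $[0,T]$, $f^\ep\le g^\ep$, the width estimate, the inclusion $\{t: f^\ep<g^\ep\}\subseteq\on{supp}(\ph)$, and that the whole sweep fits into the prescribed time $T$. None of this is carried out, and it is precisely this part that makes the lemma nontrivial.

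Two further points need repair even at the level of the sketch. First, your length estimate ``controlled by (a power of) the width'' does not yield the stated inequality: for $0\le s<\tfrac12$ one has $\|\one_{[a,b]}\|_{H^s}\asymp (b-a)^{1/2-s}$ for small widths (and $\|\one_{[a,b]}*G_\ep\|_{H^s}\le\|\F G_1\|_{L^\infty}\|\one_{[a,b]}\|_{H^s}$ only reproduces this), which is much larger than $C(b-a)$; the linear bound $\on{Len}(\ph^\ep)\le C\|f^\ep-g^\ep\|_\infty$ comes rather from an estimate of the type $\|\one_{[a,b]}*G_\ep\|_{H^s}\le\|\one_{[a,b]}\|_{L^1}\|G_\ep\|_{H^s}$, whose constant involves $\|G_\ep\|_{H^s}\sim\ep^{-1/2-s}$, so the width must be coordinated with the mollification scale; your sketch does not address this interplay (the exponent $\tfrac12-s$ would still give length $\to 0$, but not the claimed inequality). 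Second, your definition of $f$ requires $x\mapsto g^{-1}(x)+\ph(x)-x$ to be strictly increasing, i.e.\ $\tfrac1{1+\de}+\ph'(x)-1>0$, which forces $\de$ small relative to $\min\ph'$; this is easy but must be said, as must the normalization making $\{t: f^\ep(t)<g^\ep(t)\}$ (a set of \emph{times}) literally a subset of $\on{supp}(\ph)$ (a set of \emph{points}), which only makes sense once the unit-speed parametrization identifying the two is fixed.
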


Here, $G_\ep (x) = \frac1{\ep}G_1(\frac{x}{\ep})$ is a smoothing kernel, defined via a smooth bump function
$G_1$ with compact support.

\begin{proof}[Proof of Theorem~\ref{vanishingthm}]
Consider the connected component $\on{Diff}_0(M)$ of $\on{Id}$,
i.e. those diffeomorphisms of $\on{Diff}_c(M)$, for which there exists at
least one path, joining them to the identity. Denote by $\on{Diff}_c(M)^{L=0}$ 
the set of all diffeomorphisms $\ph$ that
can be reached from the identity by curves of arbitrarily short length,
i.e., for each $\ep>0$ there 
exists a curve from $\on{Id}$ to $\ph$ with length smaller than $\ep$. 
\\{\bf Claim A.} \emph{$\Diff_c(M)^{L=0}$ is a normal subgroup of $\Diff_0(M)$.}
\\{\bf Claim B.} \emph{$\Diff_c(M)^{L=0}$ is a non-trivial subgroup of $\Diff_0(M)$.}

By \cite{Thurston1974} or \cite{Mather1974}, the group $\Diff_0(M)$ is simple. 
Thus claims A and B imply $\on{Diff}_c(M)^{L=0}=\Diff_0(M)$, which proves the theorem.

The proof of claim A can be found in \cite[Theorem 3.1]{Bauer2011c_preprint} and works without change in the case of $M$ being an arbitrary manifold and hence we will not repeat it here. It remains to show that $\on{Diff}_c(M)^{L=0}$ contains a diffeomorphism $\ph \neq \on{Id}$.

We shall first prove claim B for $M=\R^n$ and then show how to extend the arguments to arbitrary manifolds. Choose a diffeomorphism $\ph_\R \in \on{Diff}_c(\R)$ with $\on{supp}(\ph_\R) \subseteq [1, \infty)$ and let
$$u^{\ep}_{\R}(t,x):=\one_{[f^{\ep}(t), g^{\ep}(t)]}*G_{\ep}(x)$$
be the family of vector fields constructed in Lemma \ref{VectorfieldR}, whose flows at time $T$ equal $\ph_\R$. We extend the vector field $u^\ep_\R$ to a vector field $u^\ep_{\R^n}$ on $\R^n$ via
$$u^\ep_{\R^n}(x_1,\ldots,x_n):= \big(u^\ep_{\R}(|x|),0,\ldots,0\big)\;.$$
The flow of this vector field is given by
\[
\ph_{\R^n}^\ep(t, x_1,\dots,x_n) = \left( \ph_\R^\ep(t, |x|), x_2, \dots, x_n \right)\;,
\]
where $\ph_\R^\ep$ is the flow of $u_\R^\ep$. In particular we see that at time $t=T$
\[
\ph_{\R^n}^\ep(t, x_1,\dots,x_n) = \left( \ph_\R(|x|), x_2, \dots, x_n \right)\;,
\]
the flow is independent of $\ep$. So it remains to show that for the length of the path $\ph_{\R^n}^\ep(t,\cdot)$ we have
\[
\on{Len}(\ph_{\R^n}^\ep) \to 0 \quad \text{as} \quad \ep \to 0\;.
\]
We can estimate the length of this path via
\begin{align*}
\on{Len}(\ph^{\ep}_{\R^n})^2 &= \left(\int_0^T \| u^{\ep}_{\R^n}(t,.)\|_{H^s(\R^n)} \ud t\right)^2 \leq T \int_0^T
\| u^{\ep}_{\R^n}(t,.)\|^2_{H^s({\R^n})} \ud t\\&=T \int_0^T
\| u^{\ep}_{\R}(t,|\cdot|)\|^2_{H^s({\R^n})} \ud t=T \int_0^T
\| \one_{[f^{\ep}(t), g^{\ep}(t)]}*G_{\ep}(|x|)\|^2_{H^s({\R^n})} \ud t\\
&\leq C(G_1, T) \int_0^T
\| \one_{[f^{\ep}(t), g^{\ep}(t)]}(|\cdot|)\|^2_{H^s({\R^n})} \ud t\;,
\end{align*}
where the last estimate follows from
\begin{align*}
\|\one_{[f^{\ep}(t), g^{\ep}(t)]}&*G_{\ep}(|x|)\|^2_{H^s({\R^n})}= \\
&= \int_{\R^n} (1+|\xi|^{2s}) \left[\F\left(\one_{[f^{\ep}(t), g^{\ep}(t)]}(|\cdot|)\right)(\xi)\right]^2\,
\left[\F\left(G_\ep(|\cdot|)\right)(\xi)\right]^2 \ud \xi \\
&= \int_{\R^n} (1+|\xi|^{2s}) \left[\F\left(\one_{[f^{\ep}(t), g^{\ep}(t)]}(|\cdot|)\right)(\xi)\right]^2\,
\left[\F\left(G_1(|\cdot|)\right)(\ep \xi)\right]^2 \ud \xi \\
&\leq \left\| \F G_1(|\cdot|) \right\|_{L^\infty}^2\cdot
\|\one_{[f^{\ep}(t), g^{\ep}(t)]}(|\cdot|)\|^2_{H^s({\R^n})}\;.
\end{align*}
Hence it is sufficient to show that
\[
\left\|\one_{[f^{\ep}(t), g^{\ep}(t)]}(|\cdot|)\right\|_{H^s({\R^n})} \on \to 0 \quad \text{as} \quad \ep \to 0 \quad \text{uniformly in } t\;.
\]

To compute the $H^s$-norm of $\one_{[f^{\ep}(t), g^{\ep}(t)]}(|\cdot|)$ we first Fourier-transform it. The Fourier-transform of a radially symmetric function $v(|\cdot|) \in L^1(\R^n)$ is again radially symmetric and given by the following formula, see \cite[Theorem 3.3]{Stein1971},
\[
(\F v(|\cdot|))(\xi) = 2\pi |\xi|^{1-n/2} \int_0^{\infty} J_{n/2-1}(2\pi |\xi| s) v(s) s^{n/2} \ud s\;,
\]
with $J_{n/2-1}$ denoting the Bessel function of order $\tfrac n2 -1$. To simplify notation we will 
omit the dependece of the vector field $\one_{[f^{\ep}(t), g^{\ep}(t)]}(|\cdot|)$ on $t$ and $\ep$.  
Changing coordinates, this becomes 
\[
(\F \one_{[f,g]}(|\cdot|))(\xi) = (2\pi)^{-n/2} |\xi|^{-n} \int_{2\pi f |\xi|}^{2\pi g |\xi|} J_{n/2-1}(s) s^{n/2} \ud s\;.
\]
This integral can be evaluated explicitly using the following integral identity for Bessel functions 
from \cite[(10.22.1)]{NIST2010}
\[
\int z^{\nu+1} J_\nu(z) \ud z = z^{\nu+1} J_{\nu+1}(z),\quad \nu \neq -\tfrac 12\;.
\]
This gives us
\[
(\F \one_{[f,g]}(|\cdot|))(\xi) = |\xi|^{-n/2} \left( J_{n/2}(2\pi g |\xi|) g^{n/2} - J_{n/2}(2\pi f |\xi|) f^{n/2}\right)\;.
\]
The $H^s$-norm of $\one_{[f,g]}(|\cdot|)$ is given by
\[
\left\| \one_{[f,g]}(|\cdot|)\right\|_{H^s(\R^n)}^2 = \int_{\R^n} \left(1 + |\xi|^{2s}\right) \F \one_{[f,g]}(|\cdot|)(\xi)^2 \ud \xi\;.
\]
We will only consider the term involving $|\xi|^{2s}$, since the $L^2$-term can be estimated in the same way by setting $s=0$. Transforming to polar coordinates we obtain
\begin{align*}
\int_{\R^n} |\xi|^{2s} \big(\F &\one_{[f,g]}(|\cdot|)(\xi)\big)^2 \ud \xi  =
\\&
= \int_{\R^n} |\xi|^{2s-n}  \left( J_{n/2}(2\pi g |\xi|) g^{n/2} - J_{n/2}(2\pi f |\xi|) f^{n/2}\right)^2 \ud \xi \\
&= \on{Vol}(S^{n-1}) \int_0^\infty r^{2s-1} \left( J_{n/2}(2\pi g r) g^{n/2} - J_{n/2}(2\pi f r) f^{n/2}\right)^2 \ud r\,.
\end{align*}
The above integral is non-zero only for those $t$, where $f^\ep(t) \neq g^\ep(t)$. From Lemma \ref{VectorfieldR} and our assumptions on $\ph_\R$ we know that
\[
\{t \,:\, f^\ep(t) < g^\ep(t)\} \subseteq \on{supp}(\ph_\R) \subseteq [1,\infty)\;.
\]
Thus both $f^\ep(t)$ and $g^\ep(t)$ are different and away from 0 and we can evaluate the above integral using the identity \cite[(10.22.57)]{NIST2010},
\begin{align*}
\int_0^\infty \frac{J_\mu(at)J_\nu(at)}{t^\la} \ud t =
\frac{\left(\tfrac 12 a\right)^{\la - 1} \Ga\left(\tfrac \mu 2 + \tfrac \nu 2 - \tfrac \la 2 + \tfrac 12\right) \Ga\left(\la\right)} {2 \Ga\left(\tfrac \la 2 + \tfrac \nu 2 - \tfrac \mu 2 + \tfrac 12\right) \Ga\left(\tfrac \la 2 + \tfrac \mu 2 - \tfrac \nu 2 + \tfrac 12\right) \Ga\left(\tfrac \la 2 + \tfrac \mu 2 + \tfrac \nu 2 + \tfrac 12\right)}\,,
\end{align*}
which holds for $\on{Re}(\mu+\nu+1) > \on{Re} \la > 0$ and the identity \cite[(10.22.56)]{NIST2010},
\begin{align*}
\int_0^\infty &\frac{J_\mu(at)J_\nu(bt)}{t^\la} \ud t =
\\&
=\frac{a^\mu \Ga\left(\tfrac \nu 2 + \tfrac \mu 2 - \tfrac \la 2 + \tfrac 12\right)} {2^\la b^{\mu-\la+1} \Ga\left(\tfrac \nu 2 - \tfrac \mu 2 + \tfrac \la 2 + \tfrac 12\right)} \mathbf{F}\left(\tfrac \nu 2 + \tfrac \mu 2 - \tfrac \la 2 + \tfrac 12, \tfrac \mu 2 - \tfrac \nu 2 - \tfrac \la 2 + \tfrac 12; \mu+1; \tfrac{a^2}{b^2} \right)\,,
\end{align*}
which holds for $0 < a < b$ and $\on{Re}(\mu+\nu+1) > \on{Re} \la > -1$. Here $\mathbf{F}(a, b;c;d)$ is the regularized hypergeometric function.
Using these identities with $\la=1-2s$, $\mu=\nu=\tfrac n2$, $a = 2\pi f$ and $b = 2\pi g$ we obtain
\begin{align*}
\int_0^\infty r^{2s-1} J_{n/2}(2\pi f r)^2 \ud r &= \frac 12 (\pi f)^{-2s} \frac{\Ga\left(\tfrac n2 + s \right) \Ga(1-2s)}{\Ga(1-s)^2 \Ga\left(\tfrac n2 +1-s\right)} \end{align*}
and
\begin{align*}
\int_0^\infty r^{2s-1} J_{n/2}(2\pi f r)&J_{n/2}(2\pi g r) \ud r =
\\&
= \frac 12 (\pi g)^{-2s} \left(\frac{f}{g}\right)^{n/2} \frac{\Ga\left(\tfrac n2 + s \right)}{\Ga(1-s)} \mathbf{F}\left(\tfrac n2 + s, s; \tfrac n2 + 1; \tfrac {f^2}{g^2}\right)\,.
\end{align*}
Putting it together results in
\begin{multline*}
\int_{\R^n} |\xi|^{2s} (\F \one_{[f,g]}(|\cdot|))(\xi)^2 \ud \xi =
\\
=\on{Vol}(S^{n-1}) \left( \frac{f^{-2s} + g^{-2s}}{2\pi^{2s}} \frac{\Ga\left(\tfrac n2 + s \right) \Ga(1-2s)}{\Ga(1-s)^2 \Ga\left(\tfrac n2 +1-s\right)} -\right.\\
\left.{}-\frac{g^{-2s}}{\pi^{2s}} \frac {f^{n/2}}{g^{n/2}} \frac{\Ga\left(\tfrac n2 + s \right)}{\Ga(1-s)} \mathbf{F}\left(\tfrac n2 + s, s; \tfrac n2 + 1; \tfrac {f^2}{g^2}\right) \right)\;.
\end{multline*}

In the limit $\ep\to 0$ we know from Lemma \ref{VectorfieldR} that $f^\ep(t) \to h(t)$ and $g^\ep(t) \to h(t)$ uniformly in $t$ on $[0,T]$ and hence $\tfrac {f^\ep(t)}{g^\ep(t)} \to 1$. For the regularized hypergeometric function $\mathbf{F}(a, b; c; d)$ at $d=1$ we have the identity \cite[(15.4.20)]{NIST2010}
\[
\mathbf{F}(a, b; c; 1) = \frac{\Ga(c-a-b)}{\Ga(c-a)\Ga(c-b)}\,,
\]
for $\on{Re}(c-a-b) > 0$. Applying the identity with $a=\tfrac n2+s$, $b=s$ and $c=\tfrac n2+1$  
we get
\[
\mathbf{F}\left(\tfrac n2 + s, s; \tfrac n2 + 1; 1\right) = \frac{\Ga(1-2s)}{\Ga(1-s)\Ga\left(\tfrac n2 +1-s\right)}\,.
\]
Using the continuity of the hypergeometric function it follows that 
\[
\int_{\R^n} |\xi|^{2s} \left(\F \one_{[f,g]}(|\cdot|))(\xi)\right)^2 \ud \xi \to 0\;,
\]
as $\ep \to 0$ and the convergence is uniform in $t$. This concludes the proof that
\[
\left\|\one_{[f^{\ep}(t), g^{\ep}(t)]}(|\cdot|)\right\|_{H^s({\R^n})} \on \to 0 \quad \text{as} \quad \ep \to 0 \quad \text{uniformly in } t\;,
\]
and hence we have established claim B for $\on{Diff}_c(\R^n)$.

To prove this result for an arbitrary manifold $M$ of bounded geometry 
we choose a partition of unity $(\ta_j)$ such that $\ta_0\equiv 1$ on some 
open subset $U\subset M$, where normal coordinates centred at $x_0\in M$ are defined.
If $\ph_{\R}$ is chosen with sufficiently small support, 
then the vector field $u^\ep_{\R^n}$ has support in $\exp_{x_0}(U)$ and we can define the vector field 
$u^\ep_M:=(\exp_{x_0}\i)^*u^\ep_{\R^n}$ on $M$. This vector field generates a path 
$\ph^\ep_M(t,\cdot) \in \Diff_0(M)$ with an endpoint 
$\ph^\ep_M(T,\cdot)=\ph_M(\cdot)$ that doesn't depend on $\ep$ with arbitrarily small $H^s$-length since
\begin{align*}
\on{Len}(\ph^\ep_M)&\leq C_1(\ta) \int_0^T\| u^\ep_M \|_{H^s(M,\ta)} \ud t=
C_1(\ta) \int_0^T\|\exp_{x_0}^*(\ta_0. u^\ep_M) \|_{H^s(\R^n)} \ud t
\\&=C_1(\ta) \int_0^T\|u^\ep_{\R^n} \|_{H^s(\R^n)} \ud t\;.
\end{align*}
Thus we can reduce the case of arbitrary manifolds to $\R^n$ and this concludes the proof.\
\end{proof}

\bibliographystyle{plain}

\begin{thebibliography}{10}

\bibitem{Bauer2011c_preprint}
Martin Bauer, Martins Bruveris, Philipp Harms, and Peter~W. Michor.
\newblock Geodesic distance for right invariant sobolev metrics of fractional
  order on the diffeomorphism group.
\newblock {\em To appear in Ann. Global Anal. Geom.}, 2012.

\bibitem{Bauer2012c}
Martin Bauer, Martins Bruveris, Philipp Harms, and Peter~W. Michor.
\newblock Vanishing geodesic distance for the {R}iemannian metric with geodesic
  equation the {K}d{V}-equation.
\newblock {\em Ann. Global Anal. Geom.}, 41(4):461--472, 2012.

\bibitem{Eichhorn2007}
J.~Eichhorn.
\newblock {\em Global analysis on open manifolds}.
\newblock Nova Science Publishers Inc., New York, 2007.

\bibitem{Eichhorn1991}
J{\"u}rgen Eichhorn.
\newblock The boundedness of connection coefficients and their derivatives.
\newblock {\em Math. Nachr.}, 152:145--158, 1991.

\bibitem{Greene1978}
R.~E. Greene.
\newblock Complete metrics of bounded curvature on noncompact manifolds.
\newblock {\em Arch. Math. (Basel)}, 31(1):89--95, 1978/79.

\bibitem{Kordyukov1991}
Yu.~A. Kordyukov.
\newblock {$L^p$}-theory of elliptic differential operators on manifolds of
  bounded geometry.
\newblock {\em Acta Appl. Math.}, 23(3):223--260, 1991.

\bibitem{Mather1974}
J.~N. Mather.
\newblock Commutators of diffeomorphisms.
\newblock {\em Comment. Math. Helv.}, 49:512--528, 1974.

\bibitem{Michor2012b_preprint}
Peter~W. Michor and David Mumford.
\newblock A zoo of diffeomorphism groups on $\mathbb {R}^n$.
\newblock 2012.
\newblock In preparation.

\bibitem{NIST2010}
Frank W.~J. Olver, Daniel~W. Lozier, Ronald~F. Boisvert, and Charles~W. Clark,
  editors.
\newblock {\em N{IST} handbook of mathematical functions}.
\newblock U.S. Department of Commerce National Institute of Standards and
  Technology, Washington, DC, 2010.

\bibitem{Shubin1992}
M.~A. Shubin.
\newblock Spectral theory of elliptic operators on noncompact manifolds.
\newblock {\em Ast\'erisque}, (207):5, 35--108, 1992.
\newblock M{\'e}thodes semi-classiques, Vol. 1 (Nantes, 1991).


\bibitem{Stein1971}
Elias~M. Stein and Guido Weiss.
\newblock {\em Introduction to {F}ourier analysis on {E}uclidean spaces}.
\newblock Princeton University Press, Princeton, N.J., 1971.
\newblock Princeton Mathematical Series, No. 32.

\bibitem{Thurston1974}
W. Thurston.
\newblock Foliations and groups of diffeomorphisms.
\newblock {\em Bull. Amer. Math. Soc.}, 80:304--307, 1974.

\bibitem{Triebel1992}
Hans Triebel.
\newblock {\em Theory of function spaces. {II}}, volume~84 of {\em Monographs
  in Mathematics}.
\newblock Birkh\"auser Verlag, Basel, 1992.

\end{thebibliography}

\end{document}